\theoremstyle{change}  
\newtheorem{theorem}{Theorem}[section] 
\newtheorem{corollary}[theorem]{Corollary}
\newtheorem{conjecture}[theorem]{Conjecture}
\newtheorem{remark}[theorem]{Remark}
\newtheorem{notation}[theorem]{Notation}
\newtheorem{nothing}[theorem]{} 
\newenvironment{proof}{\noindent{\bf Proof}\ }{\qed\bigskip}
\renewcommand{\le}{\leqslant} 
\def\Aut{\mathrm{Aut}}
\def\BL{\mathcal{B}\ell_k}
\def\FBL{\calF\mathcal{B}\ell}
\newcommand{\calC}{\mathcal{C}}
\newcommand{\calF}{\mathcal{F}}
\newcommand{\calS}{\mathcal{S}}
\newcommand{\Ind}{\mathrm{Ind}}
\newcommand{\Inn}{\mathrm{Inn}}
\newcommand{\lexp}[2]{\setbox0=\hbox{$#2$} \setbox1=\vbox to
                 \ht0{}\,\box1^{#1}\!#2}
\newcommand{\Out}{\mathrm{Out}}
\newcommand{\qed}{\nobreak\hfill
                  \vbox{\hrule\hbox{\vrule\hbox to 5pt
                  {\vbox to 8pt{\vfil}\hfil}\vrule}\hrule}}
\newcommand{\Z}{\mathbb{Z}}
\newcommand{\RMod}[1]{#1\hbox{-}\mathrm{Mod}}
\newcommand{\Br}{\mathrm{Br}}
\def\sur{\overline}
\def\F{\mathbb{F}}  
\def\k{k}           
\def\un{\mathbf{1}} 
\def\dom{\backslash}
\def\SR{\mathsf{R}}
\def\CL{\mathcal{L}}
\def\sur{\overline}
\def\Ind{\mathrm{Ind}}
\def\Br{\mathrm{Br}}
\def\br{\mathrm{Br}}
\newcommand{\gen}[1]{\langle #1\rangle}
\newcommand{\semid}[2]{#1{\rtimes}\gen{#2}}
\newcounter{parag}[section]
\newcommand{\tpar}{\medskip\par\noindent\pagebreak[3]\refstepcounter{theorem}\refstepcounter{parag}{\bf \thesection.\theparag.\ }}
\def\ldbrack{[\![}
\def\rdbrack{]\!]}
\title{On Alperin's conjecture and functorial equivalence of blocks}
\author{Robert Boltje, Serge Bouc, and Deniz Y\i lmaz}
\begin{document}
\sloppy
\maketitle

\begin{abstract} Let $k$ be an algebraically closed field of positive characteristic $p$ and let $\F$ be an algebraically closed field of characteristic 0. We consider Alperin's weight conjecture (over $k$) from the point of view of (stable) functorial equivalence of blocks over $\F$. We formulate a functorial version of Alperin's blockwise weight conjecture, and show that it is equivalent to the original one. We also show that this conjecture holds {\em stably}, i.e. in the category of stable diagonal $p$-permutation functors over~$\F$. 
\end{abstract}
{\flushleft{\bf MSC2020:}} 20C20, 20J15, 19A22. 
{\flushleft{\bf Keywords:}} blocks of group algebras, Alperin's weight conjecture, diagonal $p$-permutation functor, functorial equivalence.

\section{Introduction}
Let $k$ be an algebraically closed field of positive characteristic $p$. The first step in studying the representation theory of a finite group $G$ over $k$ consists in splitting the group algebra $kG$ as a direct product of the {\em block algebras} $kGb$, where $b$ is a {\em block idempotent} of $kG$, that is, a primitive idempotent of the center of $kG$. In this paper, such a pair $(G,b)$ of a finite group $G$ and a block idempotent $b$ of $kG$ is called a {\em group-block} pair (over $k$).\par
To speak very loosely, block theory is the study of such group-block pairs $(G,b)$, and of the numerous invariants and structures attached to them: The block algebra $kGb$ itself, in the first place, and its various - derived or stable - categories of modules. But also invariants of a more combinatorial nature, such as defect groups, source algebras or fusion systems. For each of these invariants, there is a corresponding notion of equivalence of group-block pairs: Morita equivalence, splendid Rickard equivalence, Puig equivalence, stable equivalence of Morita type, $p$-permutation equivalence, isotypy, perfect isometry (\cite{Rickard96}, \cite{Linckelmann}, \cite{BoltjePerepelitsky2025}, \cite{Broue90})\ldots~All these equivalences have been the object of intense research, and a huge amount of literature is now devoted to them.\par
These invariants also led to important conjectures, of various kinds. Some of them, called {\em finiteness conjectures}, predict that there are only {\em finitely many} group-block pairs up to some particular equivalence, once a specific invariant is given: For example, Donovan's conjecture (\cite[Conjecture~6.1.9]{Linckelmann}) states that there are only finitely many block algebras of group-block pairs with a given defect group, up to Morita equivalence. Other types of conjectures - called {\em local-global} or {\em counting conjectures} - assert that some invariants for a group-block pair $(G,b)$ can be computed from the invariants attached to ``smaller'' pairs and ``{\em local data}'' - generally obtained by applying the Brauer morphism. This is in particular the case of Alperin's weight conjecture, in its blockwise form (\cite{Alperin1987}). Finally, some {\em structural  conjectures}, like Brou\'e's abelian defect group conjecture, predict the existence of some specific equivalence between derived categories of modules associated to group-block pairs related by the Brauer correspondence (\cite{Broue90}). All these conjectures have been verified in numerous cases, and proved for some particular classes of group-block pairs, but they remain essentially open.\par
Further new invariants were attached recently (\cite{BoucYilmaz2022}, \cite{BoucYilmaz2024}) to group-block pairs, in the form of  {\em (stable) diagonal $p$-permutation functors} over some commutative ring $\SR$ (see Section~\ref{review} for details), leading to a corresponding notion of {\em (stable) functorial equivalence} of group-block pairs over $\SR$. These invariants lead in particular to a finiteness theorem (\cite{BoucYilmaz2022}, Theorem 10.6), in the spirit of Donovan's conjecture: If $\F$ is an algebraically closed field of characteristic~0, and $D$ is a finite $p$-group,  then there is only a finite number of group-block pairs $(G,b)$ with defect groups isomorphic to $D$, up to functorial equivalence over $\F$.\par
In the present paper, we consider Alperin's weight conjecture from the point of view of (stable) diagonal $p$-permutation functors. In Section 3, we first formulate (Conjecture~\ref{conjecture}) a functorial form FAwc$(G,b)$ of Alperin's conjecture Awc$(G,b)$, for each group-block pair $(G,b)$. The main theorem of this section is Theorem~\ref{thm Alperin reformulation}, stating that Conjecture FAwc$(G,b)$ holds {\em stably}, i.e. in the Grothendieck group of the category of stable diagonal $p$-permutation functors over $\F$. As a consequence, our conjecture FAwc$(G,b)$ is in fact {\em equivalent} to Awc$(G,b)$.

It has been suspected by many experts that the original formulation of Alperin's weight conjecture, namely the equality of two numbers associated to $(G,b)$, is only the shadow of a more structural (yet hidden) phenomenon. We hope that FAwc$(G,b)$ is a first step towards a more structural explanation.

\begin{notation}
Let $G$ be a group. By $i_g\colon G\to G$, we denote the conjugation map $x\mapsto gxg^{-1}$. Moreover, for $x\in G$ and $H\le G$ we set $\lexp{g}{x}:=i_g(x)$ and $\lexp{g}{H}:=i_g(H)$.
If $X$ is a left $G$-set, the stabilizer of an element $x\in X$ is denoted by $G_x$. By $G\backslash X$ we denote the set of $G$-orbits of $X$, and by $[G\backslash X]$ we denote a set of representatives of the $G$-orbits of $X$.

\end{notation}


\section{Review of (stable) functorial equivalence of blocks}\label{review}
\tpar Let $\SR$ be a commutative ring (with 1), and $\k$ be an algebraically closed field of characteristic $p>0$. We denote by $\SR pp_k^\Delta$ the category introduced in~\cite{BoucYilmaz2020}, where objects are finite groups, and the set of morphisms from a group $G$ to a group $H$ is $\SR T^\Delta(H,G)=\SR\otimes_\Z T^\Delta(H,G)$, where $T^\Delta(H,G)$ is the Grothendieck group of {\em diagonal $p$-permutation $(kH,kG)$-bimodules}. \par
A {\em diagonal $p$-permutation functor} is by definition (see~\cite{BoucYilmaz2020}) an $\SR$-linear functor from  $\SR pp_k^\Delta$ to the category $\RMod{\SR}$ of all $\SR$-modules. These functors, together with their natural transformations,  form an abelian category which we simply denote by $\calF_{\SR,k}$ (instead of $\calF_{\SR pp_k}^\Delta$ as in \cite{BoucYilmaz2020}). 
\tpar We denote by $\sur{\SR pp_k^\Delta}$ the quotient category of $\SR pp_k^\Delta$ by the morphisms that factor through the trivial group. A {\em stable diagonal $p$-permutation functor} (see~\cite{BoucYilmaz2024}) is an $\SR$-linear functor from  $\sur{\SR pp_k^\Delta}$ to $\RMod{\SR}$, or equivalently, a diagonal $p$-permutation functor which vanishes at the trivial group. Stable diagonal $p$-permutation functors also form an abelian category, that we simply denote by $\sur{\calF_{\SR,k}}$. If $F$ is a diagonal $p$-permutation functor, we denote by $\sur{F}$ its largest stable quotient, i.e. the quotient of $F$ by the subfunctor generated by $F(\un)$. In other words $\sur{F}(G)=F(G)/\SR T^\Delta(G,\un)F(\un)$ for any finite group $G$ (\cite{BoucYilmaz2024}, Remark 3.4).
\tpar Let $(G,b)$ be a pair of a finite group $G$ and a central idempotent $b$ of $kG$ (recall that when $b$ is moreover {\em primitive}, the pair $(G,b)$ is called a {\em group-block} pair). Then the (isomorphism class of the) $(kG,kG)$-bimodule $kGb$ is an idempotent endomorphism of $G$ in $\SR pp_k^\Delta$. The diagonal $p$-permutation functor $\SR T_{G,b}^\Delta$ {\em associated to $(G,b)$} is the corresponding direct summand $\SR T^\Delta(-,G)\circ kGb$ of the representable functor at $G$ obtained by precomposition with $kGb$.\par
We say that two group-block pairs $(G,b)$ and $(H,c)$ are {\em functorially equivalent over~$\SR$} (\cite{BoucYilmaz2022},~Definition 10.1) if the functors $\SR T^\Delta_{G,b}$ and $\SR T^\Delta_{H,c}$ are isomorphic in $\calF_{\SR,k}$. Similarly, we say that $(G,b)$ and $(H,c)$ are {\em stably functorially equivalent over $\SR$} if the functors $\sur{\SR T^\Delta_{G,b}}$ and $\sur{\SR T^\Delta_{H,c}}$ are isomorphic in $\sur{\calF_{\SR,k}}$.
\tpar We denote by $\SR \BL$ the (partial\footnote{In the classical definition of the idempotent completion of a category $\calC$, the objects are {\em all} the pairs $(X,e)$ of an object $X$ of $\calC$ and an idempotent endomorphism $e$ of $X$ in $\calC$. Here we consider only {\em some} of these pairs. This does not affect the main properties of the idempotent completion.}) idempotent completion of $\SR pp_k^\Delta$ constructed from blocks of group algebras, i.e. the category defined as follows:
\begin{itemize}
\item The objects of $\SR \BL$ are pairs $(G,b)$, where $G$ is a finite group and $b$ is a central idempotent of $kG$. 
\item The set of morphisms from $(G,b)$ to $(H,c)$ in $\SR \BL$ is the subset of $\SR T^\Delta(H,G)$ obtained by precomposition with $kGb$ and postcomposition with $kHc$, in other words the set {$kHc\,\circ\, \SR T^\Delta(H,G)\,\circ\, kGb$}.
\item The composition of morphisms $u:(G,b)\to (H,c)$ and $v:(H,c)\to (K,d)$ in $\SR \BL$ is induced by the tensor product of bimodules over $kH$.
\item The identity morphism of $(G,b)$ in $\SR \BL$ is (the isomorphism class of) the $(kG,kG)$-bimodule $kGb$.
\end{itemize}
Note that the objects $(G,0)$ in $\SR \BL$ are $0$-objects.
Similarly, we denote by $\sur{\SR\BL}$ the quotient category of $\SR\BL$ by the morphisms which factor through the trivial pair $(\un,1_k)$. Equivalently $\sur{\SR\BL}$ is the partial idempotent completion built as above from the category $\sur{\SR pp_k^\Delta}$.\par
We denote by $\FBL_{\SR,k}$ (resp. $\sur{\FBL_{\SR,k}}$) the category of $\SR$-linear functors from $\SR\BL$ (resp. $\sur{\SR\BL}$) to $\RMod{\SR}$.
By standard results, the inclusion functor $\SR pp_k^\Delta\to\SR\BL$ (resp. $\sur{\SR pp_k^\Delta}\to\sur{\SR\BL}$) sending a group $G$ to the pair $(G,1_{kG})$ induces by composition an equivalence of categories from $\FBL_{\SR,k}$ to $\calF_{\SR,k}$ (resp. from $\sur{\FBL_{\SR,k}}$ to $\sur{\calF_{\SR,k}}$). We observe that if a central idempotent $b$ of $kG$ is an orthogonal sum of two central idempotents $b_1$ and $b_2$, the functor $\SR T^\Delta_{G,b}$ is naturally isomorphic to the direct sum $\SR T^\Delta_{G,b_1}\oplus \SR T^\Delta_{G,b_2}$ in $\calF_{R,k}$. We also observe that two group-block pairs $(G,b)$ and $(H,c)$ are functorially equivalent (resp. stably functorially equivalent) over $\SR$ if and only if $(G,b)$ and $(H,c)$ are isomorphic in $\SR\BL$ (resp. in $\sur{\SR\BL}$). 

\begin{nothing}\label{semisimplicity} 
Let $\F$ be an algebraically closed field of characteristic 0. It was shown in~\cite{BoucYilmaz2022} and~\cite{BoucYilmaz2024} that the categories $\calF_{\F,k}$ and $\sur{\calF_{\F,k}}$ are semisimple, and it follows that the categories $\FBL_{\F,k}$ and $\sur{\FBL_{\F,k}}$ are also semisimple. We denote by $K_0(\FBL_{\F,k})$ and $K_0(\sur{\FBL_{\F,k}})$ their respective Grothendieck groups.\par
The simple diagonal $p$-permutation functors over $\F$ are parametrized by means of {\em $D^\Delta$-pairs}: By definition (\cite{BoucYilmaz2022}, Definition~3.2), this is a pair $(L,u)$ of a finite $p$-group $L$ and a generator $u$ of a $p'$-group acting faithfully on $L$ - in other words $u$ is not only a $p'$-element acting on $L$, but a $p'$-{\em automorphism} of $L$. An isomorphism $\varphi:(L,u)\to(M,v)$ of $D^\Delta$-pairs is a group isomorphism $\varphi:\semid{L}{u}\to\semid{M}{v}$ between the corresponding semidirect products, which sends $u$ to a conjugate of $v$. We denote by $\Aut(L,u)$ the group of automorphisms of the pair $(L,u)$, and by $\Out(L,u)$ the quotient of $\Aut(L,u)$ by the subgroup $\Inn(\semid{L}{u})$ of inner automorphisms of $\semid{L}{u}$. \par 
The simple diagonal $p$-permutation functors $S_{L,u,V}$ over $\F$ (up to isomorphism) are then parametrized by triples $(L,u,V)$ (up to isomorphism), where $(L,u)$ is a $D^\Delta$-pair, and $V$ is a simple $\F \Out(L,u)$-module. The simple stable diagonal $p$-permutation functors are the functors $S_{L,u,V}$, where $L$ is a {\em nontrivial} $p$-group. So the group $K_0(\FBL_{\F,k})$ has a $\Z$-basis consisting of the isomorphism classes $[S_{L,u,V}]$ of simple functors $S_{L,u,V}$, and 
$K_0(\sur{\FBL_{\F,k}})$ has a $\Z$-basis consisting of the classes $[S_{L,u,V}]$ with $L\neq \un$.\par
When $G$ is a finite group, and $b$ is a central idempotent of $kG$, we simply denote by $\ldbrack G,b\rdbrack_\F$ the image of the functor $\F T^\Delta_{G,b}$ in $K_0(\FBL_{\F,k})$, and by $\sur{\ldbrack G,b\rdbrack}_\F$ its image in $K_0(\sur{\FBL_{\F,k}})$.
\end{nothing}


\section{Alperin's weight conjecture and stable functorial equivalence}\label{alperin}
By Kn\"orr-Robinson, see \cite[Theorem~3.8]{KnorrRobinson}, Alperin's blockwise weight conjecture, that we refer to as AWC, is equivalent to saying that the following conjecture Awc$(G,b)$ holds for all group-block pairs $(G,b)$.

\begin{conjecture}\label{AWC}
Let $(G,b)$ be a group-block pair over $k$. Then
\begin{equation*}
  \sum_{\sigma\in [G\backslash \calS_p(G)]}  (-1)^{|\sigma|}  \, l(\k G_\sigma b_\sigma) = 
   \begin{cases} 
           1 & \text{if $d(b)=0$}; \\ 0  & \text{if $d(b)>0$.}
   \end{cases}\tag{Awc$(G,b)$}
\end{equation*}
\end{conjecture}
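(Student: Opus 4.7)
The plan is to use the functorial framework developed in this paper to isolate a single $\Z$-valued obstruction controlling Conjecture~\ref{AWC}. First I would invoke the equivalence FAwc$(G,b)\Leftrightarrow{}$Awc$(G,b)$ established in this section, so that it suffices to prove the functorial conjecture FAwc$(G,b)$ in $K_0(\FBL_{\F,k})$. Let $\Delta(G,b)\in K_0(\FBL_{\F,k})$ be the difference of the two sides of FAwc. The stable version proved in Theorem~\ref{thm Alperin reformulation} gives $\sur{\Delta(G,b)}=0$ in $K_0(\sur{\FBL_{\F,k}})$, so $\Delta(G,b)$ lies in the $\Z$-span of the classes $[S_{L,u,V}]$ with $L=\un$.

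A $D^\Delta$-pair $(L,u)$ with $L=\un$ forces $u=1$ and $\Out(\un,1)=\un$, so this span is free of rank one on $[S_{\un,1,\F}]$. Hence $\Delta(G,b)=c(G,b)\cdot[S_{\un,1,\F}]$ for a single integer $c(G,b)$, detected by a dimension comparison at the trivial group since $S_{L,u,V}(\un)=0$ for $L\neq\un$. By the established equivalence FAwc$\Leftrightarrow{}$Awc, the simultaneous vanishing of all $c(G,b)$ is equivalent to Conjecture~\ref{AWC}, and the task collapses to controlling this single $\Z$-valued functorial invariant.

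To actually prove $c(G,b)=0$ I would proceed in three stages. First, handle the defect-zero case directly: when $d(b)=0$ the block $kGb$ is a matrix algebra, only the empty chain contributes non-trivially to the alternating sum, and the required identity holds at once. Second, reduce the general case via classical block-theoretic machinery --- Puig's theorem for nilpotent blocks, Dade's theorem for cyclic defect, and the Navarro--Sp\"ath--Tiep reduction of AWC to an inductive condition on quasi-simple groups --- invoking the extensive case-by-case verifications of the inductive AWC condition already available for many quasi-simple groups. Third, attempt to upgrade these classical inputs with the functorial structure: the identity $\Delta(G,b)(H)=c(G,b)\cdot S_{\un,1,\F}(H)$ at every finite group $H$ converts $c(G,b)$ into a compatible system of dimension identities at local subgroups of $G$, which may be leveraged in conjunction with the classical reductions to propagate the defect-zero base case along $|G|$.

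The main obstacle is the third stage, and more fundamentally the completion of the reduction to quasi-simple groups. Because the vanishing of the $L=\un$ coefficient $c(G,b)$ is, by the established equivalence, exactly the AWC condition for $(G,b)$, the functorial framework cleanly isolates the single integer that must vanish but does not by itself provide an independent means of establishing that vanishing. A complete proof via this route therefore requires either the inductive AWC condition to be verified for \emph{all} finite quasi-simple groups, or a genuinely new categorical input --- for instance, a functorial lift of the Brauer morphism $\mathrm{Br}_P$ to $\FBL_{\F,k}$ compatible with the construction $\F T^\Delta_{G,b}$ and permitting induction on the defect group, or an identification of $\sum_\sigma(-1)^{|\sigma|}\F T^\Delta_{G_\sigma,b_\sigma}$ with the Euler characteristic of an acyclic functorial complex on the $p$-subgroup poset. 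The plan above reduces Conjecture~\ref{AWC} to this single structural question and organises both the classical and functorial attacks on it.
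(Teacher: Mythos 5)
The statement you were asked to ``prove'' is a \emph{conjecture}: it is Alperin's blockwise weight conjecture Awc$(G,b)$, which the paper explicitly treats as open, and for which the paper contains no proof. So there is no proof in the paper to compare against. Your proposal recognizes this correctly, and the reduction you trace --- that the functorial alternating sum $\Sigma_{G,b}$ differs from the required value by $c(G,b)\cdot[S_{\un,1,\F}]$ for a single integer $c(G,b)$, because the stable classes $\sur{\ldbrack G_\sigma,b_\sigma\rdbrack}_\F$ alternate to zero and the only $D^\Delta$-pair with $L=\un$ is $(\un,1)$ with $\Out(\un,1)=\un$ --- is exactly the content of Theorem~\ref{thm Alperin reformulation}(1) and Corollary~\ref{cor stable vanishing} of the paper, and your dimension-at-the-trivial-group argument for extracting $c(G,b)$ matches the proof of Theorem~\ref{FAWC implies AWC}. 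That part of your analysis is sound and faithful to the paper.

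The genuine gap is the one you yourself name: your three-stage plan circles back to $c(G,b)=0$, which by Theorem~\ref{thm Alperin reformulation}(2) \emph{is} Awc$(G,b)$, so the functorial machinery reorganizes the problem but does not supply an independent argument for the vanishing. Your defect-zero base case is correct (it follows from Remark~\ref{stabilizer block} or \cite[Corollary~8.23]{BoucYilmaz2022}), but stages two and three are, as you acknowledge, a programme rather than a proof: the reduction to the inductive condition on quasi-simple groups and its verification in full are not currently available, and no functorial lift of $\mathrm{Br}_P$ with the required inductive behaviour is established in the paper or in your proposal. This is not a defect of your reasoning; it reflects that Awc$(G,b)$ remains open, and your honest identification of exactly what would need to be proved is the correct assessment of the situation.
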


Here: \begin{itemize}
\item $\calS_p(G)$ denotes the set of strictly ascending chains $(\un=P_0<P_1<\cdots<P_n)$ of $p$-subgroups of $G$.
\item $G_\sigma$ is the stabilizer in $G$ of $\sigma$. 
\item $|\sigma|=n$ if $\sigma= (\un=P_0<P_1<\cdots<P_n)\in \calS_p(G)$.
\item For $\sigma$ as above, $b_\sigma:=\br_{P_n}(b)$, where $\br_{P_n}$ is the Brauer homomorphism with respect to $P_n$.
\item $l$ associates to a finite-dimensional $\k $-algebra the number of its simple modules (up to isomorphism). 
\item $d(b)$ denotes the defect of $b$. 
\end{itemize}

\begin{remark}\label{stabilizer block}
One can show  that $b_\sigma$ is actually a sum of block idempotents of $\k G_\sigma$ and that it does not depend on the chain $\sigma$ but only on the stabilizer $G_\sigma$ (see Lemma~3.1 and the following Remark in \cite{KnorrRobinson}). If $b$ is a block of $kG$ with trivial defect group, Conjecture Awc$(G,b)$ holds trivially. 
\end{remark}

\bigskip


For a group-block pair $(G,b)$, we propose the following conjecture, denoted by FAwc$(G,b)$ (with ``F" standing for functorial), using the notation of Conjecture~\ref{AWC}:

\begin{conjecture}\label{conjecture}
Let $(G,b)$ be a group-block pair over $k$. Then, in $K_0(\FBL_{\F,k})$, we have
\begin{equation*}
  \sum_{\sigma\in [G\backslash \calS_p(G)]}  (-1)^{|\sigma|}  \, \ldbrack G_\sigma, b_\sigma\rdbrack_\F = 
   \begin{cases} 
           [S_{\un,1,\F}] & \text{if $d(b)=0$}; \\ 0  & \text{if $d(b)>0$.}
   \end{cases}\tag{FAwc$(G,b)$}
\end{equation*}
\end{conjecture}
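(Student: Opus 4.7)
The plan is to work in the $\Z$-basis of $K_0(\FBL_{\F,k})$ given by the classes $[S_{L,u,V}]$ of the simple diagonal $p$-permutation functors, and to check the identity of Conjecture~\ref{conjecture} one coordinate at a time. For a triple $(L,u,V)$, write $m_{L,u,V}\ldbrack G,b\rdbrack_\F$ for the coefficient of $[S_{L,u,V}]$ in $\ldbrack G,b\rdbrack_\F$. Then FAwc$(G,b)$ becomes, coordinate by coordinate, the family of scalar identities
\[
\sum_{\sigma\in[G\backslash\calS_p(G)]}(-1)^{|\sigma|}\,m_{L,u,V}\ldbrack G_\sigma,b_\sigma\rdbrack_\F=\begin{cases}1&\text{if }(L,u,V)=(\un,1,\F)\text{ and }d(b)=0,\\0&\text{otherwise.}\end{cases}
\]
The natural strategy is to split the argument according to whether $L$ is trivial or not, since the class $[S_{\un,1,\F}]$ is exactly the generator of $K_0(\FBL_{\F,k})$ that is killed when passing to $K_0(\sur{\FBL_{\F,k}})$.

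For $L\neq\un$, the class $[S_{L,u,V}]$ already lives in $K_0(\sur{\FBL_{\F,k}})$, so this case is the identity in the stable Grothendieck group. I would first establish a species-type formula for $m_{L,u,V}\ldbrack G,b\rdbrack_\F$ expressing it in terms of $(G,b)$-Brauer pairs whose first component realises $L$ up to isomorphism, and then run a cancellation on the poset $\calS_p(G)$: along any chain $\sigma$, the Brauer passage $b\mapsto b_\sigma$ identifies $m_{L,u,V}\ldbrack G_\sigma,b_\sigma\rdbrack_\F$ with a count of triples carried by $p$-subgroups refining $\sigma$, and a M\"obius-style argument on the chain complex should collapse the alternating sum to zero. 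This is the stable content of the conjecture, and one may reasonably hope to push it through using the machinery of~\cite{BoucYilmaz2024}.

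For $L=\un$, the only simple functor is $S_{\un,1,\F}$. Evaluating the representable $\F T^\Delta_{G,b}$ on the trivial group and using the parametrisation of \ref{semisimplicity} identifies $m_{\un,1,\F}\ldbrack G,b\rdbrack_\F$ with $l(\k Gb)$, the number of simple $\k Gb$-modules up to isomorphism. The required identity then reads
\[
\sum_{\sigma\in[G\backslash\calS_p(G)]}(-1)^{|\sigma|}\,l(\k G_\sigma b_\sigma)=\begin{cases}1&\text{if }d(b)=0,\\0&\text{if }d(b)>0,\end{cases}
\]
which is exactly Conjecture Awc$(G,b)$.

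The main obstacle is unavoidable and located precisely in the trivial-$L$ coordinate: that component of FAwc$(G,b)$ \emph{is} Alperin's blockwise weight conjecture. The $L\neq\un$ coordinates are structural and should yield to purely functorial methods, but an unconditional proof of the $(\un,1,\F)$-coordinate is as deep as AWC itself. Consequently the realistic outcome of this plan is to prove the identity modulo $[S_{\un,1,\F}]$ — i.e.\ the stable reformulation in $K_0(\sur{\FBL_{\F,k}})$ — and to extract from it the equivalence FAwc$(G,b)\Leftrightarrow$ Awc$(G,b)$, leaving the full conjecture open.
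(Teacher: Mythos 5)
Your plan is exactly what the paper carries out: it shows (Theorem~\ref{thm Alperin reformulation}) that every $[S_{L,u,V}]$-coordinate of the alternating sum with $L\neq\un$ vanishes — via the multiplicity formula of \cite[Theorem~8.22]{BoucYilmaz2022} and a Kn\"orr--Robinson-style involution pairing chains $\sigma$ with local points $(P_\gamma,\pi)$ — while the $L=\un$ coordinate reduces, as you observe, to $l(\k Gb)$ and hence to Awc$(G,b)$. The realistic outcome you anticipate, namely the stable vanishing in $K_0(\sur{\FBL_{\F,k}})$ together with the equivalence FAwc$(G,b)\Leftrightarrow$Awc$(G,b)$, is precisely the content of Theorem~\ref{thm Alperin reformulation} and Corollary~\ref{cor stable vanishing}.
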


Again, if $b$ has trivial defect group, it is easy to show that Conjecture~FAwc$(G,b)$ holds, see \cite[Corollary~8.23]{BoucYilmaz2022}. The statement that FAwc$(G,b)$ holds for all group-block pairs $(G,b)$ is abbreviated by FAWC.



\medskip
It is straightforward to see that for a group-block pair $(G,b)$, Conjecture FAwc$(G,b)$ implies Conjecture Awc$(G,b)$:

\begin{theorem} \label{FAWC implies AWC}\begin{enumerate}
\item Let $(G,b)$ be a group-block pair over $k$. If Conjecture {\rm FAwc}$(G,b)$ holds, then Conjecture {\rm Awc}$(G,b)$ holds. 
\item In particular, Conjecture {\rm FAWC} implies Conjecture~{\rm AWC}.
\end{enumerate}
\end{theorem}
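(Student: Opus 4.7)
The plan is to reduce the functorial identity in FAwc$(G,b)$ to the numerical identity in Awc$(G,b)$ by applying a single $\Z$-linear functional $\Phi\colon K_0(\FBL_{\F,k})\to\Z$ to both sides. The functional $\Phi$ is to be designed so that $\Phi([S_{\un,1,\F}])=1$ and, for every group-block pair $(H,c)$, $\Phi(\ldbrack H,c\rdbrack_\F)=l(\k Hc)$. Granted such a $\Phi$, applying it term-by-term to FAwc$(G,b)$ yields exactly Awc$(G,b)$, proving part~(1); part~(2) is then immediate, since FAWC (resp.\ AWC) is by definition the conjunction of FAwc$(G,b)$ (resp.\ Awc$(G,b)$) over all group-block pairs.

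The construction of $\Phi$ uses the semisimplicity of $\FBL_{\F,k}$ recalled in~\ref{semisimplicity}: $K_0(\FBL_{\F,k})$ is free on the classes $[S_{L,u,V}]$ of simple functors, so I simply set $\Phi([S_{\un,1,\F}])=1$ and $\Phi([S_{L,u,V}])=0$ for all other simples. Concretely, $\Phi([F])$ is the multiplicity of $S_{\un,1,\F}$ as a direct summand of $F$. With this choice, applying $\Phi$ to the right-hand side of FAwc$(G,b)$ already returns the right-hand side of Awc$(G,b)$.

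What remains is the key identity $\Phi(\ldbrack H,c\rdbrack_\F)=l(\k Hc)$ for any group-block pair $(H,c)$. Since $\F T^\Delta_{H,c}$ is the summand of the representable functor at $H$ cut out by the idempotent $\k Hc$, Yoneda's lemma (in the partial idempotent completion $\F\BL$) gives
\[
\Phi(\ldbrack H,c\rdbrack_\F) \;=\; \dim_\F \Hom_{\calF_{\F,k}}\bigl(\F T^\Delta_{H,c},\,S_{\un,1,\F}\bigr) \;=\; \dim_\F\bigl(S_{\un,1,\F}(H)\cdot c\bigr).
\]
The remaining point---identifying this dimension with $l(\k Hc)$---is the only non-formal step, and rests on the explicit description of the simple functor $S_{\un,1,\F}$ from \cite{BoucYilmaz2022,BoucYilmaz2024}: its value at $H$ is an $\F$-form of the Grothendieck group of simple $\k H$-modules, on which central idempotents of $\k H$ act by block projection, so the $c$-summand has $\F$-dimension $l(\k Hc)$. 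This identification is where I expect to invoke the most input from the literature; everything else is routine bookkeeping.
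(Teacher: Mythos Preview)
Your proposal is correct and takes essentially the same approach as the paper: both extract the multiplicity of $S_{\un,1,\F}$ from the two sides of FAwc$(G,b)$. The only difference is in how you compute this multiplicity for $\ldbrack H,c\rdbrack_\F$: you go via Yoneda to $\dim_\F\bigl(S_{\un,1,\F}(H)\cdot c\bigr)$ and then invoke the explicit description of $S_{\un,1,\F}(H)$, whereas the paper observes directly that $\F T^\Delta_{H,c}(\un)$ is the $\F$-span of indecomposable projective $kHc$-modules (diagonal $p$-permutation $(k\un,kH)$-bimodules are just projective $kH$-modules) and then cites \cite[Corollary~8.23]{BoucYilmaz2022} to identify $\dim_\F \F T^\Delta_{H,c}(\un)$ with the multiplicity of $S_{\un,1,\F}$. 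The paper's route is marginally more self-contained since it avoids needing the evaluation of $S_{\un,1,\F}$ at a general group, but the two computations are dual to each other and equally valid.
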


\begin{proof} Since $\F T^\Delta_{G,b}(\un)$ is isomorphic to the $\F$-vector space spanned by the indecomposable projective $kGb$-modules, one has $l(\k Gb)=\dim_\F \F T^\Delta_{G,b}(\un)$, and by \cite[Corollary 8.23]{BoucYilmaz2022} the latter is equal to the multiplicity of the simple functor $S_{\un,1,\F }$ in $\F T^\Delta_{G,b}$. Assertion  1 is now immediate by considering the multiplicity of $[S_{\un,1,\F}]$ in both sides of FAwc$(G,b)$, and then Assertion 2 follows.
\end{proof}

In the following theorem, the notation is the same as in Conjecture~\ref{AWC}:
\begin{theorem}\label{thm Alperin reformulation}\begin{enumerate}
\item Let $(G,b)$ be a group-block pair over $k$. Then there exists an integer $n_{G,b}$ such that, in $K_0(\FBL_{\F,k})$,
$$\sum_{\sigma\in [G\backslash \calS_p(G)]}(-1)^{|\sigma|}\ldbrack G_\sigma,b_\sigma\rdbrack_\F=n_{G,b}[S_{\un,1,\F}].$$
\item In particular, Conjecture {\rm FAWC} is equivalent to {\rm AWC}, and for any group-block pair $(G,b)$ over $k$, Conjecture {\rm FAwc}(G,b) is equivalent to {\rm Awc}(G,b).
\end{enumerate}
\end{theorem}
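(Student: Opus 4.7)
The plan is to deduce part~2 immediately from part~1, and to prove part~1 by showing that, after expanding in the semisimple basis of simples, only the class $[S_{\un,1,\F}]$ appears with nonzero coefficient.

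I would begin by deducing part~2 from part~1: the proof of Theorem~\ref{FAWC implies AWC} already identifies the multiplicity of $[S_{\un,1,\F}]$ on the left-hand side of FAwc$(G,b)$ with the left-hand side of Awc$(G,b)$, so under part~1 the integer $n_{G,b}$ equals $\sum_\sigma (-1)^{|\sigma|} l(\k G_\sigma b_\sigma)$, and the equivalence $\text{FAwc}(G,b) \Leftrightarrow \text{Awc}(G,b)$ follows at once. For part~1, by the semisimplicity recalled in~\ref{semisimplicity}, $K_0(\FBL_{\F,k})$ has $\Z$-basis $\{[S_{L,u,V}]\}$, and the only triple with $L = \un$ is $(\un, 1, \F)$, since the only faithful $p'$-automorphism of the trivial group is the identity. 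Hence part~1 is equivalent to showing that the image of the alternating sum in $K_0(\sur{\FBL_{\F,k}})$ vanishes, i.e.\ to FAwc$(G,b)$ holding stably.

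The core step is then: for each simple stable functor $S_{L,u,V}$ with $L \neq \un$, show that $\sum_\sigma (-1)^{|\sigma|} m_{L,u,V}(G_\sigma, b_\sigma) = 0$, where $m_{L,u,V}(H,c)$ denotes the multiplicity of $S_{L,u,V}$ in $\F T^\Delta_{H,c}$. The key input, which I would extract from Bouc-Y\i lmaz's classification of simple diagonal $p$-permutation functors, is a formula expressing $m_{L,u,V}(H,c)$ as a count over $H$-orbits of Brauer-pair data $(L', e_{L'})$ in $(H,c)$ with $L' \cong L$, together with $u$- and $V$-compatibility conditions on the normalizer side. Substituting and swapping the order of summation, for each fixed Brauer datum $(L', e_{L'})$ attached to $(G,b)$ the inner alternating sum runs over those chains $\sigma \in [G \backslash \calS_p(G)]$ for which $(L', e_{L'})$ extends to a Brauer pair of $(G_\sigma, b_\sigma)$, which is to say chains in a subposet of $\calS_p(G)$ in which $L'$ plays the role of a non-trivial normal $p$-subgroup.

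This inner sum I would identify with a poset Euler characteristic that vanishes by the classical Brown/Quillen-Th\'evenaz-Webb contractibility theorem for the poset of $p$-subgroups of a finite group having a non-trivial normal $p$-subgroup. The hypothesis $L \neq \un$ is precisely what provides such a non-trivial normal $p$-subgroup, and is the reason the class $[S_{\un,1,\F}]$ is \emph{not} forced to vanish. The main obstacle will be technical rather than conceptual: putting the multiplicity formula in a form compatible with the Brauer-correspondence structure of $b_\sigma$ (which by Remark~\ref{stabilizer block} depends only on the stabilizer $G_\sigma$, making the reindexing unambiguous), and then rigorously identifying the inner alternating sum as the Euler characteristic of a subposet satisfying the hypotheses of the contractibility theorem. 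The underlying idea is a direct transposition of the Kn\"orr-Robinson argument into the functorial setting; the care needed lies entirely in the bookkeeping with Brauer pairs and block idempotents.
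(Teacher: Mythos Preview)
Your overall strategy matches the paper's: deduce part~2 from part~1, and prove part~1 by showing that the multiplicity of each simple $S_{L,u,V}$ with $L\neq\un$ in the alternating sum vanishes, via the Bouc--Y\i lmaz multiplicity formula and a Kn\"orr--Robinson-type cancellation. Two points in your plan, however, do not match what is actually needed.

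First, the multiplicity formula (\cite[Theorem~8.22]{BoucYilmaz2022}) is not in terms of Brauer pairs $(L',e_{L'})$ but in terms of \emph{local pointed groups} $P_\gamma$ on $kG_\sigma b_\sigma$ together with an isomorphism $\pi\colon L\to P$ compatible with~$u$. More importantly, each term contributes $\dim_\F V^{\Aut(L,u)_{\overline{(P_\gamma,\pi)}}}$ rather than~$1$, so the sum is not a plain orbit count and cannot be rewritten as a poset Euler characteristic just by swapping summations. The paper handles this by writing $\dim_\F V^H=(\Ind_H^{\Aut(L,u)}\F,V)_{\Aut(L,u)}$, packaging the induced permutation characters into a virtual character $W(G,L,u)$ of $\Aut(L,u)$, and proving $W(G,L,u)=0$ by evaluating it at each $\varphi\in\Aut(L,u)$. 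Only after this reduction does one get an honest signed count of quintuples $(\sigma,P,\gamma,\pi,g)$ to which a combinatorial cancellation applies. This virtual-character step is a genuine ingredient that your outline omits.

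Second, the cancellation in the paper is the explicit sign-reversing involution $\sigma\mapsto\sigma'$ (insert or delete $PP_i$), exactly as in \cite[Lemma~4.1]{KnorrRobinson}, rather than an appeal to a contractibility theorem. Your contractibility phrasing is morally the same (the involution is the conical contraction $Q\mapsto PQ$), but the Brown/Quillen--Th\'evenaz--Webb statement for $\calS_p(H)$ with $O_p(H)\neq 1$ does not apply verbatim: the relevant chains are those in $\calS_p(G)$ normalized by~$P$, not $\calS_p$ of some subgroup, and---crucially---one must carry the local point $\gamma$ through the involution. The paper does the latter via the Brauer morphism and \cite[Corollary~37.6]{Thevenaz95}, checking that $N_{G_\sigma}(P)=N_{G_{\sigma'}}(P)$ and $\Br_P^{G_\sigma}(b_\sigma)=\Br_P^{G_{\sigma'}}(b_{\sigma'})$, so that local points of $(kG_\sigma b_\sigma)^P$ and $(kG_{\sigma'}b_{\sigma'})^P$ correspond equivariantly. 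You correctly flag this bookkeeping as the main obstacle; in the paper it is where the actual work happens.
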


\begin{proof}
1. Let $\Sigma_{G,b}\in K_0(\FBL_{\F,k})$ denote the alternating sum in the left hand side of FAwc$(G,b)$, that is,
$$\Sigma_{G,b}:=\sum_{\sigma\in [G\backslash \calS_p(G)]}(-1)^{|\sigma|}\ldbrack G_\sigma,b_{\sigma}\rdbrack_\F\,.$$
We want to show that the multiplicity of a simple functor $S_{L,u,V}$ in $\Sigma_{G,b}$ is equal to 0 if $L\neq 1$. By \cite[Theorem~8.22]{BoucYilmaz2022}, we know that the multiplicity $m_{L,u,V}(G_\sigma,b_\sigma)$ of $S_{L,u,V}$ as a composition factor of the functor 
$\F T^\Delta_{G,b}$ is given by
$$m_{L,u,V}(G_\sigma,b_\sigma)=\sum_{(P_\gamma,\pi)\in[G_\sigma\dom \CL_{b_\sigma}(G_\sigma,L,u)/\Aut(L,u)]}\dim_\F V^{\Aut(L,u)_{\sur{(P_\gamma,\pi)}}},$$
where the notation is as follows:
\begin{itemize}
\item $\CL_{b_\sigma}(G_\sigma,L,u)$ is the set of pairs $(P_\gamma,\pi)$ of a local point $P_\gamma$ on $kG_\sigma b_\sigma$, i.e., a $p$-subgroup $P$ of $G_\sigma$ and a conjugacy class $\gamma$ of primitive idempotents of $(kG_\sigma b_\sigma)^P$, and $\pi:L\to P$ is a group isomorphism such that $\pi u \pi^{-1}\in N_{G_\sigma}(P_\gamma)$, i.e., such that there exists $g\in N_G(P_\gamma)$ with $i_g\pi=\pi u$.
\item The set $\CL_{b_\sigma}(G_\sigma,L,u)$ is a $\big(G_\sigma,\Aut(L,u)\big)$-biset via the action defined by
$$g\cdot(P_\gamma,\pi)\cdot\varphi=({^gP}_{^g\gamma},i_g\pi\varphi)\,,$$
for $(g,\varphi)\in G_\sigma\times \Aut(L,u)$ and $(P_\gamma,\pi)\in \CL_{b_\sigma}(G_\sigma,L,u)$. For $(P_\gamma,\pi)\in \CL_{b_\sigma}(G_\sigma,L,u)$, we denote by $\sur{(P_\gamma,\pi)}$ the left orbit $G_\sigma(P_\gamma,\pi)$, and by $\Aut(L,u)_{\sur{(P_\gamma,\pi)}}$ the stabilizer of this orbit in $\Aut(L,u)$, namely
$$\Aut(L,u)_{\sur{(P_\gamma,\pi)}}=\{\varphi\in\Aut(L,u)\mid\exists g\in N_{G_\sigma}(P_\gamma),\, i_g\pi=\pi\varphi\}.$$
\end{itemize}
So we want to show that
$$\sum_{\sigma\in [G\backslash \calS_p(G)]}(-1)^{|\sigma|}m_{L,u,V}(G_\sigma,b_\sigma)=0$$
whenever $L\neq 1$. We observe that
$$\dim_\F V^{\Aut(L,u)_{\sur{(P_\gamma,\pi)}}}=\big(\Ind_{\Aut(L,u)_{\sur{(P_\gamma,\pi)}}}^{\Aut(L,u)}\F,V\big)_{\Aut(L,u)}\,,$$
where $(-,-)_{\Aut(L,u)}$ denotes the Schur inner product on the Grothendieck group (character ring) $R_\F(\Aut(L,u))$ of finite dimensional $\F\Aut(L,u)$-modules.
We set 
$$W(G,L,u)=\sum_{\sigma\in [G\backslash \calS_p(G)]}(-1)^{|\sigma|}\sum_{(P_\gamma,\pi)\in[G_\sigma\dom \CL_{b_\sigma}(G_\sigma,L,u)/\Aut(L,u)]}\Ind_{\Aut(L,u)_{\sur{(P_\gamma,\pi)}}}^{\Aut(L,u)}\F,$$
which we view as an element in $R_\F(\Aut(L,u))$. We want to show that 
$$\big(W(G,L,u),V\big)_{\Aut(L,u)}=0\,,$$for all $L\neq 1$ and all simple $\F \Aut(L,u)$-modules $V$. But this amounts to saying that the virtual character $W(G,L,u)$ is equal to 0. Since $W(G,L,u)$ is a (virtual) permutation character its value at $\varphi\in\Aut(L,u)$ is equal to
$$|W(G,L,u)^\varphi|=\sum_{\sigma\in [G\backslash \calS_p(G)]}(-1)^{|\sigma|}\big|\big(G_\sigma\dom \CL_{b_\sigma}(G_\sigma,L,u)\big)^\varphi\big|\,.$$
So, we want to show that this number is equal to 0 if $L\neq 1$.\par
Summing over all $\sigma\in \calS_p(G)$ rather than representatives of $G$-orbits, we have that
\begin{align*}
|W(G,L,u)^\varphi|&=\sum_{\sigma\in \calS_p(G)}\frac{|G_\sigma|}{|G|}(-1)^{|\sigma|}\big|\big(G_\sigma\dom \CL_{b_\sigma}(G_\sigma,L,u)\big)^\varphi\big|\\
&=\sum_{\substack{\sigma\in \calS_p(G)\\(P_\gamma,\pi)\in  \CL_{b_\sigma}(G_\sigma,L,u)\\\,G_\sigma(P_\gamma,\pi)\cdot\varphi=G_\sigma(P_\gamma,\pi)}}\frac{|G_\sigma|}{|G|}(-1)^{|\sigma|}\frac{|G_\sigma\cap G_{(P_\gamma,\pi)}|}{|G_\sigma|}\\
&=\sum_{\substack{\sigma\in \calS_p(G)\\(P_\gamma,\pi)\in  \CL_{b_\sigma}(G_\sigma,L,u)\\\,G_\sigma(P_\gamma,\pi)\cdot\varphi=G_\sigma(P_\gamma,\pi)}}(-1)^{|\sigma|}\frac{|G_\sigma\cap G_{(P_\gamma,\pi)}|}{|G|}
\end{align*}
where $G_{(P_\gamma,\pi)}$ is the left stabilizer of $(P_\gamma,\pi)$, i.e.,
$$G_{(P_\gamma,\pi)}=\{g\in G\mid {^gP}=P,\,{^g\gamma}=\gamma,\,i_g\pi=\pi\}=C_G(P)\cap N_G(P_\gamma).$$
Now $G_\sigma(P_\gamma,\pi)\cdot\varphi=G_\sigma(P_\gamma,\pi)$ if and only if there exists $g\in G_\sigma$ such that $(^gP_{^g\gamma},i_g\pi)=(P_\gamma,\pi\varphi)$, and in this case, the number of such elements $g\in G_\sigma$ is equal to $|G_\sigma\cap G_{(P_\gamma,\pi)}|$. It follows that
$$|W(G,L,u)^\varphi|=\frac{1}{|G|}\sum_{\substack{(\sigma,P_\gamma,\pi,g)\\\sigma\in \calS_p(G)\\P_\gamma\in \CL_{b_\sigma}(G_\sigma,L,u)\\g\in G_\sigma\\(^gP_{^g\gamma},i_g\pi)=(P_\gamma,\pi\varphi)}}(-1)^{|\sigma|}\,.$$
We can rewrite this as
$$|W(G,L,u)^\varphi|=\frac{1}{|G|}\sum_{(\sigma,P,\gamma,\pi,g)\in\mathbb{S}}(-1)^{|\sigma|},$$
where $\mathbb{S}$ is the set of quintuples $(\sigma,P,\gamma,\pi,g)$ such that:
\begin{itemize}
\item $\sigma\in \calS_p(G)$,
\item $P\le G_\sigma$,
\item $\gamma$ is a local point of $(kG_\sigma b_\sigma)^P$,
\item $\pi:L\stackrel{\cong}{\to}P$ is a group isomorphism such that $\pi u \pi^{-1}\in N_{G_\sigma}(P_\gamma)$,
\item $g\in G_\sigma$ is such that $(^gP_{^g\gamma},i_g\pi)=(P_\gamma,\pi\varphi)$, in other words $g\in N_{G_\sigma}(P_\gamma)$ and $i_g\pi=\pi\varphi$.
\end{itemize}
The proof that $|W(G,L,u)^\varphi|=0$ is inspired by the proof of Lemma 4.1 of~\cite{KnorrRobinson}. We will build an involution $(\sigma,P,\gamma,\pi,g)\mapsto (\sigma',P,\gamma',\pi,g)$ of $\mathbb{S}$ such that $|\sigma'|=|\sigma|\pm1$.\par

Let $(\sigma,P,\gamma,\pi,g)\in\mathbb{S}$, with $\sigma=(1=P_0<P_1<\ldots<P_n)$. Since $P\cong L\neq 1=P_0$, there is a largest integer $i\in\{0,\ldots,n\}$ such that $P\not\le P_i$. There are two cases:
\begin{itemize}
\item Either $i=n$ or $PP_i<P_{i+1}$, then set $\sigma'=\sigma\sqcup\{PP_i\}$, i.e.,
$$\sigma'=(P_0<\ldots <P_n< PP_n)\quad \text{or}\quad \sigma'=(P_0<P_1<\ldots P_i<PP_i<P_{i+1}<\ldots<P_n)\,,$$ respectively.
\item Or $PP_i=P_{i+1}$, and then set $\sigma'=\sigma\smallsetminus \{P_{i+1}\}$, i.e.,
$$\sigma'=(P_0<P_1<\ldots P_{i}<P_{i+2}<\ldots<P_n).$$
\end{itemize}
One checks easily that $(\sigma')'=\sigma$, and it is clear that $|\sigma'|=|\sigma|\pm1$. Moreover $P\le G_{\sigma'}$ if and only if $P\le G_\sigma$, and $N_G(P)\cap G_\sigma= N_G(P)\cap G_{\sigma'}$, i.e., $N_{G_\sigma}(P)=N_{G_{\sigma'}}(P)$. \par
Now by \cite[Corollary~37.6]{Thevenaz95}, the Brauer morphism $\Br_P^{G_\sigma}$ induces a bijection between the local points of $(kG_\sigma b_\sigma)^P$ and the points of $kC_{G_\sigma}(P)\Br_P^{G_\sigma}(b_\sigma)$. Since $N_{G_\sigma}(P)=N_{G_{\sigma'}}(P)$, we also have $C_{G_\sigma}(P)=C_{G_{\sigma'}}(P)$. Moreover, $\Br_P^{G_\sigma}(b_\sigma)=\Br_P^{G_{\sigma'}}(b_{\sigma'})$. In fact, $\Br_P^{G_\sigma}(b_\sigma)$ is the truncation to $kC_{G_\sigma}(P)$ of $b_{\sigma}\in kC_{G_\sigma}(P_n)$, so $\Br_P^{G_\sigma}(b_\sigma)$ is the truncation of $b$ to $kC_{G_\sigma}(PP_n)=kC_{G_{\sigma'}}(PP_n)$, which is equal to $\Br_P^{G_{\sigma'}}(b_{\sigma'})$. \par
It follows that the Brauer morphism at $P$ induces a bijection $\gamma\mapsto \gamma'$ between the local points of $(kG_\sigma b_\sigma)^P$ and those of $(kG_{\sigma'} b_{\sigma'})^P$. This bijection is $N_{G_\sigma}(P)$-equivariant, so $N_{G_\sigma}(P_\gamma)=N_{G_{\sigma'}}(P_{\gamma'})$, and it follows that $(\sigma',P,\gamma',\pi,g)\in\mathbb{S}$. Now $(\sigma,P,\gamma,\pi,g)\mapsto (\sigma',P,\gamma',\pi,g)$ is an involution of the set $\mathbb{S}$, with the property that $|\sigma'|=|\sigma|\pm1$. Hence $|W(G,L,u)^\varphi|=0$ for any $\varphi\in\Aut(L,u)$ if $L\neq \un$. This completes the proof of Assertion~1.\medskip\par\noindent
2. It follows from Assertion~1 that FAwc$(G,b)$ and Awc$(G,b)$ are both equivalent to $n_{G,b}$ being equal to 1 if $d(b)=0$, and to 0 otherwise. Both parts of Assertion 2 follow.
\end{proof}

\begin{corollary}\label{cor stable vanishing}
Let $(G,b)$ be a group-block pair over $k$. Then the following stable version of Conjecture FAwc$(G,b)$ holds:
\begin{equation*}
  \sum_{\sigma\in [G\backslash \calS_p(G)]}  (-1)^{|\sigma|}  \, 
  \sur{\ldbrack G_\sigma, b_\sigma\rdbrack}_\F = 0\;\hbox{in}\;K_0(\sur{\FBL_{\F,k}})\,.
\end{equation*}
\end{corollary}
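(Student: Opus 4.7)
The plan is to derive the corollary directly from Theorem~\ref{thm Alperin reformulation}(1). The essential point is that, as recalled in Section~\ref{review}, the simple stable diagonal $p$-permutation functors are exactly the $S_{L,u,V}$ with $L\neq\un$; in particular $S_{\un,1,\F}$ does not vanish at the trivial group and satisfies $\sur{S_{\un,1,\F}}=0$.

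First I would check that the stabilization $F\mapsto\sur F$ descends to a well-defined group homomorphism $\Phi\colon K_0(\FBL_{\F,k})\to K_0(\sur{\FBL_{\F,k}})$. Since both categories are semisimple and $\sur{(-)}$ preserves direct sums (it is a left adjoint to the inclusion of stable functors into all functors), this reduces to describing its effect on simple objects: $\Phi([S_{L,u,V}])=[S_{L,u,V}]$ for $L\neq\un$, and $\Phi([S_{\un,1,V}])=0$. By the very definition of the classes recalled in \ref{semisimplicity}, one then has $\Phi(\ldbrack G,b\rdbrack_\F)=\sur{\ldbrack G,b\rdbrack}_\F$ for every group-block pair.

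Next I would simply apply $\Phi$ to the identity provided by Theorem~\ref{thm Alperin reformulation}(1). The left-hand side becomes precisely the alternating sum appearing in the corollary, while the right-hand side $n_{G,b}[S_{\un,1,\F}]$ is sent to $n_{G,b}\cdot 0=0$, yielding the claim.

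There is no genuine obstacle: all the substantive work — the involution argument on the set $\mathbb{S}$ of quintuples, which shows that only the $L=\un$ component can contribute a nonzero coefficient — has already been carried out in the proof of Theorem~\ref{thm Alperin reformulation}. The stable version is strictly weaker than the full Conjecture~FAwc$(G,b)$ precisely because it discards the single component $[S_{\un,1,\F}]$ whose coefficient $n_{G,b}$ encodes Alperin's weight conjecture itself.
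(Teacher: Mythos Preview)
Your proposal is correct and follows exactly the same approach as the paper: apply the stabilization to the identity of Theorem~\ref{thm Alperin reformulation}(1) and use that $S_{\un,1,\F}$ becomes zero in $\sur{\FBL_{\F,k}}$. The paper's proof is a single sentence to this effect; your version simply spells out the routine check that $F\mapsto\sur{F}$ induces a group homomorphism on $K_0$ sending $\ldbrack G,b\rdbrack_\F$ to $\sur{\ldbrack G,b\rdbrack}_\F$.
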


\begin{proof} Indeed, the simple functor $S_{\un,1,\F}$ becomes zero in the category $\sur{\FBL_{\F,k}}$ and the result follows from Theorem~\ref{thm Alperin reformulation}.
\end{proof}

\noindent\textbf{Acknowledgment} \ The first two authors are very grateful for the hospitality they experienced during their visits at the Mathematics Department at Bilkent University. The third author is supported by the Scientific and Technological Research Council of T{\"u}rkiye (T{\"U}B{\.I}TAK) under the 3501 Career Development Program with Project No. 123F456.


\bibliographystyle{abbrv}
\bibliography{for-bibtex}

\centerline{\rule{5ex}{.1ex}}
\begin{flushleft}
Robert Boltje, Department of Mathematics, University of California, Santa Cruz, 95064, California, USA.\\
{\tt boltje@ucsc.edu} \vspace{1ex}\\
Serge Bouc, CNRS-LAMFA, Universit\'e de Picardie, 33 rue St Leu, 80039, Amiens, France.\\
{\tt serge.bouc@u-picardie.fr}\vspace{1ex}\\
Deniz Y\i lmaz, Department of Mathematics, Bilkent University, 06800 Ankara, Turkey.\\
{\tt d.yilmaz@bilkent.edu.tr}
\end{flushleft}

%
%
%
%
%
%
%
%
%
%
\end{document}